\newtheorem{theo}{Theorem}[section]
\newcommand{\old}[1]{{}}
\def\emptyset{\varnothing}
\newcolumntype{L}[1]{>{\raggedright\let\newline\\\arraybackslash\hspace{0pt}}b{#1}}
\newcolumntype{C}[1]{>{\centering\let\newline\\\arraybackslash\hspace{0pt}}b{#1}}
\newcolumntype{R}[1]{>{\raggedleft\let\newline\\\arraybackslash\hspace{0pt}}m{#1}}
\providecommand{\keywords}[1]{\textbf{\textit{Keywords---}} #1} 
\newcolumntype{M}[1]{>{\centering\arraybackslash}m{#1}}
\begin{document}

\pagestyle{plain}

\title{An Improved Algorithm for Counting Graphical Degree Sequences}

\author{
Kai Wang\footnote{Department of Computer Sciences,
Georgia Southern University,
Statesboro, GA 30460, USA 
\tt{kwang@georgiasouthern.edu}},
Troy Purvis\footnote{Department of Computer Sciences,
Georgia Southern University,
Statesboro, GA 30460, USA 
\tt{tp02863@georgiasouthern.edu}}
}
\maketitle

\begin{abstract}
We present an improved version of a previous efficient algorithm that computes the number $D(n)$ of zero-free graphical degree
sequences of length $n$. A main ingredient of the improvement
lies in a more efficient way to compute the function $P(N,k,l,s)$ of Barnes and Savage.
We further show that the algorithm can be easily adapted to compute the $D(i)$
values for all $i\le n$ in a single run. Theoretical analysis shows that the new algorithm to compute all
$D(i)$ values for $i\le n$ is a constant times faster than the previous algorithm to compute a single $D(n)$.
Experimental evaluations show that the constant of improvement is about 10.
We also perform simulations to estimate the asymptotic order of $D(n)$ by generating uniform random samples from the set of
$E(n)$ integer partitions
of fixed length $n$ with even sum and largest part less than $n$ and computing the proportion of them that are graphical degree
sequences. The known numerical results of $D(n)$ for $n\le 290$ together with the known bounds of $D(n)$ and simulation
results allow us to make an informed guess about its unknown asymptotic order. The techniques for the improved algorithm
can be applied to compute other similar functions that count the number of graphical degree
sequences of various classes of graphs of order $n$ and that all involve the function $P(N,k,l,s)$.
\end{abstract}
\keywords{counting, graphical degree sequence, graphical partition, asymptotic order}

\section{Introduction}
We consider finite simple graphs (i.e. finite undirected graphs without loops or multiple edges) and their graphical degree sequences
that are treated as multisets (i.e. the order of the terms in the sequence does not matter).
The notion of an integer partition is well-known in number theory.
The terms in an integer partition and a graphical degree sequence are
often written in non-increasing order for convenience. An integer partition is called a graphical partition if it is the vertex
degree sequence of some simple graph. Any given integer partition $(a_1, a_2, \cdots, a_n)$ can be easily tested
whether it is a graphical partition, for example, through the Erd{\H{o}}s-Gallai criterion \cite{ErdosCallai1960}.
Clearly a zero-free graphical degree sequence and
a graphical partition are equivalent notions. The former is often used in a context where the lengths of the considered
sequences are the same and fixed and the latter is often used in a context where the sums of the parts in the considered
partitions are the same and fixed. 

It is well-known that the number of graphs of order $n$ can be efficiently calculated exactly using the Redfield-P\'{o}lya
theorem \cite{Redfield1927,Polya1937} and also asymptotically (which is $2^{n\choose 2}/n!$) based on the fact that almost
all graphs of order $n$ have only the trivial automorphism when $n$ is large \cite{HararyPalmer1973}.
Somewhat surprisingly, no algorithm was known to efficiently compute the number $D_0(n)$
of graphical degree sequences of length $n$
until recently. Previous known algorithms to compute $D_0(n)$ are from Ruskey et al. \cite{Ruskey1994}
and Iv\'{a}nyi et al. \cite{Ivanyi2013}.
Ruskey et al.'s algorithm can compute $D_0(n)$ by generating all graphical degree sequences of length
$n$ using a highly efficient ``Reverse Search'' approach which seems to run in constant amortized time.
Iv\'{a}nyi et al.'s algorithm can compute the number $D(n)$ of zero-free graphical degree sequences of length $n$ by
generating the set of all $E(n)$ integer partitions of $n$ parts with even sum and each part less than $n$ and testing
whether they are graphical partitions using linear time algorithms similar to the Erd{\H{o}}s-Gallai criterion.
The $D_0(n)$ value can be easily calculated when all $D(i)$ values for $i\le n$ are known since
$D_0(n)=1+\sum_{i=2}^nD(n)$ when $n\ge 2$ \cite{Ivanyi2013}. Burns \cite{Burns2007} proves good exponential upper
bound $O(4^n/((\log n)^c\sqrt{n}))$ and lower bound $\Omega(4^n/n)$ of $D_0(n)$ for sufficiently large $n$, although its tight
asymptotic order is still unknown. The exponential lower bound of $D_0(n)$ necessarily makes
these enumerative algorithms run in time exponential in $n$ and therefore
impractical for the purpose of computing $D_0(n)$.

In \cite{Wang2016} concise formulas and efficient polynomial time dynamic programming
algorithms have been presented to calculate $D_0(n)$, $D(n)$ and the number $D_{k\_con}(n)$ of graphical
degree sequences of $k$-connected graphs of order $n$ for every fixed $k\ge 1$ all
based on an ingenious recurrence of Barnes and Savage \cite{BarnesSavage1995}. Unfortunately the asymptotic
orders of these functions do not appear to be easily obtainable through these formulas, which is why we
currently strive to compute as many values of these functions as possible for the purpose of guessing their asymptotic trends.
Although these new algorithms for $D_0(n)$ and $D(n)$ are fast with time complexity $O(n^6)$,
they still quickly encounter bottlenecks because of the large space complexity $O(n^5)$.
This motivates us to further investigate the possibility of reducing memory usage for these algorithms.

In this paper we introduce nontrivial improvement to these algorithms, using the computation of $D(n)$ as an example,
to achieve significant memory usage reductions besides proportional run time reductions. We also show
that the algorithm can be easily adapted to compute the $D(i)$ values for all $i\le n$ in a single run with essentially
the same run time and memory usage
as computing a single $D(n)$ value. The introduced techniques can be applied to all similar algorithms
that compute the number of graphical degree sequences of various classes of simple graphs of order $n$ based on the
recurrence of Barnes and Savage. We will prove that the new algorithm that computes all $D(i)$ values for $i\le n$ achieves
a constant factor improvement in both space and time complexity than the previous algorithm in \cite{Wang2016}
that computes a single $D(n)$ value.
The experimental performance evaluations show that the constant is about 10. We also briefly mention the guessed
asymptotic order of $D(n)$ based on simulation results and the prospects of determining its unknown growth order.

\begin{table}[!htb]
	\centering
	\caption{Terminology used in this paper}
	\begin{tabular}{||c|l||}
		\hline\hline
		Term & Meaning\\
		\hline\hline
		$\mathbf{P}(N)$ & set of unrestricted partitions of an integer $N$\\
		\hline
		$\mathbf{P}(N,k,l)$ & set of partitions of an integer $N$ into at most $l$ parts\\
		& with largest part at most $k$ \\
		\hline
		$\mathbf{P}(N,k,l,s)$ & subset of $\mathbf{P}(N,k,l)$ determined by integer $s$ (See def. (\ref{eqn:PNkls}))\\
		\hline
		$\mathbf{G}^{'}(N,l)$ & set of graphical partitions of $N$ with exactly $l$ parts\\
		\hline
		$\mathbf{H}^{'}(N,l)$ & set of graphical partitions of $N$ with exactly $l$ parts\\
		& and largest part exactly $l-1$\\
		\hline
		$\mathbf{L}^{'}(N,l)$ & set of graphical partitions of $N$ with exactly $l$ parts\\
		& and largest part less than $l-1$\\
		\hline
		$\mathbf{D}(n)$ & set of zero-free graphical degree sequences of length $n$ \\
		\hline
		$\mathbf{D}_0(n)$ & set of graphical degree sequences of length $n$ allowing zero terms \\
		\hline
		$\mathbf{E}(n)$ & set of integer partitions of $n$ parts with even sum and each part $<n$ \\
		\hline
		$\mathbf{H}(n)$ & subset of $\mathbf{D}(n)$ with largest part exactly $n-1$ \\
		\hline
		$\mathbf{L}(n)$ & subset of $\mathbf{D}(n)$ with largest part less than $n-1$ \\
		\hline
		$\mathbf{I}_e(N_1,N_2)$&$\{N:N_1\le N\le N_2, N \mbox{ is an even integer}\}$\\
		\hline
		$\mathbf{I}'_e(N_1,N_2)$&$\{N:N_1\le N< N_2, N \mbox{ is an even integer}\}$\\
		\hline\hline
	\end{tabular}
	\label{tbl:definitions}
\end{table}

\section{Review of the algorithms for $D(n)$ ($L(n)$)}
\label{sec:basic_alg}
In this section we review the relevant notations, formulas and algorithms in \cite{Wang2016}.
For the reader's convenience, the terminology employed in this paper is summarized in Table \ref{tbl:definitions}.
We use bold face letters to indicate a set and the same normal face letters to indicate the cardinality of that set.
For example,
$\mathbf{P}(N,k,l)$ is the set of partitions of an integer $N$ into at most $l$ parts with largest part at most $k$ while
$P(N,k,l)$ is the cardinality of the set $\mathbf{P}(N,k,l)$, i.e. the number of partitions of an integer $N$ into at most $l$
parts with largest part at most $k$.

As shown in \cite{Wang2016}, there are concise formulas to compute $D_0(n)$, $D(n)$, $H(n)$ and $L(n)$ which all involve the
function $P(N,k,l,s)$ introduced by Barnes and Savage \cite{BarnesSavage1995}. The original definition
of the set $\mathbf{P}(N,k,l,s)$ is as follows \cite{BarnesSavage1995}:
\begin{equation} \label{eqn:PNkls}
\mathbf{P}(N,k,l,s)=\left\{ \begin{array}{ll}
\emptyset & \mbox{if $s < 0$};\\
\{\pi \in \mathbf{P}(N,k,l) : s+\sum_{i=1}^{j}r_i(\pi)\ge j, 1 \le j \le d(\pi)\} & \mbox{if $s \ge 0$}.\end{array} \right.
\end{equation}
In this definition $d(\pi)$ is the side length (number of rows) of the Durfee square of the Ferrers diagram of the integer
partition $\pi$. The function $r_i(\pi)$ is defined as $r_i(\pi)=\pi_i^{'}-\pi_i$ where
$\pi_i$ and $\pi_i^{'}$ are the number of dots in the $i$-th row and column of the Ferrers diagram of $\pi$,
respectively, for $1 \le i \le d(\pi)$. Equivalently, $\pi_i$ and $\pi_i^{'}$ are the $i$-th largest part of the
partition $\pi$ and the conjugate of $\pi$, respectively.
In the literature the values $\pi_i-\pi_i^{'}$ are called \textit{ranks} of a partition $\pi$
so the values $r_i(\pi)$ can also be called \textit{coranks} of the partition $\pi$.

The calculation of the function $P(N,k,l,s)$ need not follow the definition of $\mathbf{P}(N,k,l,s)$. Instead
it can be efficiently calculated using dynamic programming through a recurrence of Barnes and Savage \cite[Theorem 1]{BarnesSavage1995}. Our improved algorithm in the next section mainly focuses on how to compute this function
in a more efficient way.

We summarize some of the formulas from \cite{Wang2016} here:
\begin{equation} \label{eqn:D0(n)}
D_0(n)=\sum_{N\in \mathbf{I}_e(0,n(n-1))}P(N,n-1,n,0).
\end{equation}
\begin{equation} \label{eqn:D(n)}
\begin{split}
D(n)&=\sum_{N\in \mathbf{I}_e(n,n(n-1))}G^{'}(N,n) \\
&=\sum_{N\in \mathbf{I}_e(n,n(n-1))}\sum_{k=1}^{n-1}P(N-k-n+1,k-1,n-1,n-k-1).
\end{split}
\end{equation}
\begin{equation} \label{eqn:L(n)}
\begin{split}
L(n)&=\sum_{N\in \mathbf{I}_e(n, n(n-2))}L^{'}(N,n) \\
&=\sum_{N\in \mathbf{I}_e(n, n(n-2))}\sum_{k=1}^{n-2}P(N-k-n+1,k-1,n-1,n-k-1).
\end{split}
\end{equation}
These formulas can all be implemented in efficient dynamic programming algorithms that run in time polynomial in $n$
based on the recurrence of Barnes and Savage \cite[Theorem 1]{BarnesSavage1995}.

As indicated in \cite{Wang2016}, the computation of $D(n)$ can be transformed into the computation of $L(n)$ if $D_0(n-1)$
is already known based on the relation
\begin{equation} \label{eqn:D(n)L(n)}
D(n)=L(n)+D_0(n-1).
\end{equation}
The benefit of this transformation is to save memory because we only need to calculate half of the $L^{'}(N,n)$
($N\in \mathbf{I}_e(n, n(n-1)/2)$ among $N\in \mathbf{I}_e(n, n(n-2))$)
values in order to calculate $L(n)$ due to the symmetry of the sequence $L^{'}(N,n)$ in the sense that
\begin{equation} \label{eqn:L(N,n)Symmetry}
L^{'}(N,n)=L^{'}(n(n-1)-N,n), N\in \mathbf{I}_e(n, n(n-2)).
\end{equation}
This transformation makes it feasible to allocate a smaller four dimensional array, which is about one quarter
of the size for calculating $D(n)$ using formula (\ref{eqn:D(n)}) directly,
to hold the necessary $P(*,*,*,*)$ values in order to compute $L(n)$. The sequence of $G^{'}(N,n)$ values for
$N\in \mathbf{I}_e(n,n(n-1))$ used in formula (\ref{eqn:D(n)}),
though also unimodal as $L^{'}(N,n)$ for $N\in \mathbf{I}_e(n, n(n-2))$,
does not possess a similar symmetry. With this transformation in mind, we will treat the computation of $L(n)$
and $D(n)$ as equivalent problems.

\begin{algorithm}[h]
	\DontPrintSemicolon 
	\KwIn{A positive integer $n$}
	\KwOut{$L(n)$}
	$N \gets n(n-1)/2$\;
	Allocate a four dimensional array $P[N-n+1][n-2][n][N-n+1]$\;
	Fill in the array $P$ using dynamic programming based on \cite[Theorem 1]{BarnesSavage1995}\;
	$S \gets 0$\;
	\For{$i \in \mathbf{I}'_e(n,N)$ } {
		\For{$j \gets 1$ \textbf{to} $\min ({n-2,i-n+1})$ } {
			$S \gets S+P[i-j-n+1][j-1][n-1][n-j-1]$\;
		}
	}
	$S \gets 2S$\;
	\If{$N$ is even}	{ 
		\For{$j \gets 1$ \textbf{to} $\min ({n-2,N-n+1})$ } {
			$S \gets S+P[N-j-n+1][j-1][n-1][n-j-1]$\;
		}
	}
	\Return{$S$}\;
	\caption{An algorithm to compute $L(n)$.}
	\label{algo:D(n)}
\end{algorithm}

We now reiterate the pseudo-code to compute $L(n)$, and hence $D(n)$ when $D_0(n-1)$ is known, in Algorithm \ref{algo:D(n)}
from \cite{Wang2016} based on formulas (\ref{eqn:L(n)}) and (\ref{eqn:D(n)L(n)})
and the symmetry (\ref{eqn:L(N,n)Symmetry}). The variable $S$ is used to store the value of $L(n)$. Line 2 indicates
the allocation sizes for the four dimensions of the array $P$. When elements of this
array are later retrieved on line 7 and 11, we use the convention that array indices start from 0 such that
the array element $P[N][k][l][s]$ stores the function value $P(N,k,l,s)$.

As noted in \cite{Wang2016}, we can choose to allocate only size 2 for the third dimension of the
array $P$ in Algorithm \ref{algo:D(n)} since each $P(*,*,l,*)$ value depends only on the $P(*,*,l,*)$ and
$P(*,*,l-1,*)$ values according to \cite[Theorem 1]{BarnesSavage1995} and for the purpose of computing $L(n)$
only the $P(*,*,n-1,*)$ values are used on line 7 and 11.

In the next section we will introduce further improvements to this algorithm in order to save memory besides
run time.

\section{Improved algorithm for $L(n)$ ($D(n)$)}
\label{sec:improved_alg}
In this section we first show how the computation of a single $L(n)$ value can be improved in Algorithm \ref{algo:D(n)}.
A main idea is to reduce the allocation size for the fourth dimension of the array $P$ based on
some simple observations about the function $P(N,k,l,s)$ regarding its fourth variable $s$.
Then we show how the algorithm can be easily extended to compute all $L(i)$ values for $i\le n$ in a single run
with essentially the same run time and memory usage as the computation of the single $L(n)$ value.

As mentioned in Section \ref{sec:basic_alg}, we need to calculate all the $L^{'}(N,n)$ values for
$N\in \mathbf{I}_e(n, n(n-1)/2)$ in order to calculate $L(n)$, where $L^{'}(N,n)$ can be calculated as:
\begin{equation} \label{eqn:L(Nn)}
L^{'}(N,n)=\sum_{k=1}^{n-2}P(N-k-n+1,k-1,n-1,n-k-1).
\end{equation}
It is clear that the largest index of the first dimension of all the needed $P(*,*,*,*)$ values is at most
$n(n-1)/2-n=n(n-3)/2$ (corresponding to $N=n(n-1)/2$ and $k=1$). This explains why the allocation size for the first dimension
of the array $P$ in Algorithm \ref{algo:D(n)} is $n(n-3)/2+1$. In fact this allocation size can be slightly reduced.
For each pair of $N\in \mathbf{I}_e(n, n(n-1)/2)$ and $1\le k\le n-2$, each term $P(N-k-n+1,k-1,n-1,n-k-1)$ in the sum
(\ref{eqn:L(Nn)}) for $L^{'}(N,n)$ is nonzero only if
\[ N-k-n+1\le (k-1)(n-1) \]
by definition. This inequality reduces to $k\ge N/n$. This means we only need to include in the sum the $P(N-k-n+1,k-1,n-1,n-k-1)$
values for which $N-k\le N-N/n=N(1-1/n)$. Since $N\le n(n-1)/2$, the largest index of the first dimension of all the needed
non-zero $P(*,*,*,*)$ values is thus at most $\frac{n(n-1)}{2}(1-1/n)-n+1=(n^2+3)/2-2n=(n-1)(n-3)/2$, which is slightly smaller than
$n(n-3)/2$. It is also evident that the largest index of
the fourth dimension of all the needed $P(*,*,*,*)$ values for calculating each
$L^{'}(N,n)$ is $n-2$ (corresponding to $k=1$). However, this does not mean that we can simply allocate size $n-1$ for
the last dimension of the array $P$ in Algorithm \ref{algo:D(n)}.
If we examine the recurrence for $P(N,k,l,s)$ in \cite[Theorem 1]{BarnesSavage1995},
we can see that the indices of the first three dimensions never increase in any recursive computation
involving this four-variate function, while the index of the last dimension could increase because one of
the four terms $P(N,k,l,s)$ depends on is $P(N-k-l+1,k-1,l-1,s+l-k-1)$ whose index in the last dimension ($s+l-k-1$)
could be larger than $s$. The good news is that we do not need to make an allocation for the last dimension larger than
that for the first dimension since \cite[Theorem 1]{BarnesSavage1995} also ensures that $P(N,k,l,s)=P(N,k,l,N)$
for $s\ge N$. This explains why the first and fourth dimensions of the array $P$ have the same allocation sizes on line 2
in Algorithm \ref{algo:D(n)}. And based on the above discussion the allocation sizes for these two dimensions can be reduced
from $n(n-3)/2+1$ to $(n^2+5)/2-2n$.

Now we show that the allocation size $(n^2+5)/2-2n$ for the fourth dimension of the array $P$ in Algorithm \ref{algo:D(n)}
is conservative and it can be further reduced. First we recall a lemma of Barnes and Savage, on which
\cite[Theorem 1]{BarnesSavage1995} is partly based:
\begin{theo}
	\label{thm_fourth_dimension}
	\cite[Lemma 5]{BarnesSavage1995} $\mathbf{P}(N,k,l,s)=\mathbf{P}(N,k,l,N)=\mathbf{P}(N,k,l)$ for $s\ge N$.
\end{theo}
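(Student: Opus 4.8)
The plan is to show that the corank constraints in definition (\ref{eqn:PNkls}) become vacuous once $s\ge N$, so that the restricted set collapses to the unrestricted family $\mathbf{P}(N,k,l)$; specializing to $s=N$ then yields the middle equality as a special case, and the whole chain follows. Concretely, since $s\ge N\ge 0$ we are in the second branch of (\ref{eqn:PNkls}), so it suffices to prove that \emph{every} $\pi\in\mathbf{P}(N,k,l)$ satisfies $s+\sum_{i=1}^{j}r_i(\pi)\ge j$ for all $1\le j\le d(\pi)$. Because $s\ge N$, this reduces to the single estimate
\begin{equation*}
\sum_{i=1}^{j}r_i(\pi)\ge j-N,\qquad 1\le j\le d(\pi).
\end{equation*}

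First I would rewrite $\sum_{i=1}^{j}r_i(\pi)=\sum_{i=1}^{j}\pi_i'-\sum_{i=1}^{j}\pi_i$ using $r_i(\pi)=\pi_i'-\pi_i$. The subtracted sum is bounded trivially: $\sum_{i=1}^{j}\pi_i\le\sum_{i\ge 1}\pi_i=N$ since $\pi$ is a partition of $N$ with nonnegative parts. For the other sum, the key point is that $j\le d(\pi)$ forces the first $j$ columns of the Ferrers diagram of $\pi$ to be nonempty: by definition of the Durfee square, $\pi_1\ge\cdots\ge\pi_{d(\pi)}\ge d(\pi)\ge j$, so for each $i\le j$ there are at least $d(\pi)$ parts that are $\ge i$, giving $\pi_i'\ge 1$ (in fact $\pi_i'\ge d(\pi)$) and hence $\sum_{i=1}^{j}\pi_i'\ge j$. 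Combining the two bounds yields $\sum_{i=1}^{j}r_i(\pi)\ge j-N$, and therefore $s+\sum_{i=1}^{j}r_i(\pi)\ge N+(j-N)=j$ whenever $s\ge N$.

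Since this holds for every $\pi\in\mathbf{P}(N,k,l)$ and every $j$ in range, no partition of $\mathbf{P}(N,k,l)$ is filtered out, so $\mathbf{P}(N,k,l,s)=\mathbf{P}(N,k,l)$ for all $s\ge N$; taking $s=N$ gives $\mathbf{P}(N,k,l,N)=\mathbf{P}(N,k,l)$, which completes the chain of equalities. I do not expect a genuine obstacle here: the only points needing care are the degenerate case $d(\pi)=0$ (where the constraint list is empty and membership is immediate) and getting the Durfee-square indexing exactly right when justifying $\sum_{i=1}^{j}\pi_i'\ge j$; the rest is a one-line telescoping inequality.
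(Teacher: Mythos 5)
Your argument is correct: for $s\ge N$ the estimate $\sum_{i=1}^{j}r_i(\pi)=\sum_{i=1}^{j}\pi_i'-\sum_{i=1}^{j}\pi_i\ge j-N$ (valid because $\pi_i'\ge d(\pi)\ge j\ge 1$ for every $i\le j\le d(\pi)$, while $\sum_{i=1}^{j}\pi_i\le N$) makes every constraint in definition (\ref{eqn:PNkls}) vacuous, so no $\pi\in\mathbf{P}(N,k,l)$ is excluded and the chain of equalities follows, with the degenerate case $d(\pi)=0$ handled as you note. The paper itself only quotes this as \cite[Lemma 5]{BarnesSavage1995} without reproducing a proof, so there is no competing argument to compare against; your direct verification from the definition is the natural one and is the same corank analysis the paper later refines to the sharper threshold $M'(N,k)$.
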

We show the condition $s\ge N$ in this theorem can be easily improved based on the original definition of
$\mathbf{P}(N,k,l,s)$, which can then be
used to further save memory space usage of Algorithm \ref{algo:D(n)}. Based on the definition in (\ref{eqn:PNkls}), if we define
an integer function $M(N,k,l)$ to be
\[ M(N,k,l)=\max_{\pi\in \mathbf{P}(N,k,l),1\le j\le d(\pi)}\{j-\sum_{i=1}^{j}r_i(\pi)\},\]
then we clearly have $\mathbf{P}(N,k,l,s)=\mathbf{P}(N,k,l,N)=\mathbf{P}(N,k,l)$ for $s\ge \max\{0,M(N,k,l)\}$. Based
on the definition of $r_i(\pi)=\pi_i^{'}-\pi_i$, the partition $\pi$ in $\mathbf{P}(N,k,l)$ that achieves the maximum in the definition
of $M(N,k,l)$ is the partition $\pi^*$ of $N$ with as many parts equal to $k$ as possible with the associated $j^*$ equal
to $d(\pi^*)$. This shows that the function $M(N,k,l)$ actually does not depend on $l$ and we can write it as $M(N,k)$.
Note that $M(N,k)$ could take negative values. For the purpose of improving Algorithm \ref{algo:D(n)}, we define
the nonnegative function
\[ M'(N,k)=\max\{0,M(N,k)\}, \]
and with this definition we clearly have $P(N,k,l,s)=P(N,k,l,N)=P(N,k,l)$ for
$s\ge M'(N,k)$. The pseudo-code for computing $M'(N,k)$ is presneted in Algorithm \ref{algo:M(N,k)} based on the
$\pi^*$ and $j^*$ mentioned above. It is easy to see that $M'(N,k)\le N$. Furthermore we observe that on average
$M'(N,k)$ is a lot smaller than $N$, which improves the condition in Theorem \ref{thm_fourth_dimension} and makes this
function a main ingredient for saving memory space of Algorithm \ref{algo:D(n)} in our improved algorithm.

\begin{algorithm}[h]
	\DontPrintSemicolon 
	\KwIn{A positive integer $N$ and a positive integer $k$}
	\KwOut{$M'(N,k)$}
	$q \gets \lfloor N/k \rfloor$\;
	$r \gets N\mod k$\;
	\If{$r=0$}	{ 
		\If{$k\ge q$} {
			\Return{$q(k-q+1)$}\;
		}
		\Else {
			\Return{$0$}\;
		}
	}
	\Else{
		\If{$k\ge q$} {
			\If{$r\le q$} {
    				\Return{$q(k-q+1)-r$}\;
    			}
    			\Else {
				\Return{$q(k-q-1)+r$}\;
    			}
		}
		\Else {
			\Return{$0$}\;
		}
	}
	\caption{Pseudo-code for computing the helper function $M'(N,k)$.}
	\label{algo:M(N,k)}
\end{algorithm}
In order to further save memory space usage of Algorithm \ref{algo:D(n)}, we define a new function
$Q(l,k,N,s)$ by reversing the order of the first three variables of the four-variate function $P(N,k,l,s)$, i.e.
\[ Q(l,k,N,s)=P(N,k,l,s). \]
With this definition a four dimensional array $Q$ can be created in the improved algorithm in place of the array
$P$ such that the allocation sizes of latter dimensions of $Q$ can be made dependent on
former dimensions and as small as possible. Specifically, the allocation size of the third
dimension of the array $Q$ can be made dependent on the first two dimensions
(explained below) and that of its fourth dimension can be made dependent on the second and third dimensions
due to the fact that $Q(l,k,N,s)=Q(l,k,N,N)=P(N,k,l)$ for $s\ge M'(N,k)$ as explained above.

Now in our improved algorithm to compute $L(n)$, the allocation size of the first
dimension of the four dimensional array $Q$ can be chosen to be 2 since, as explained before, each $Q(l,*,*,*)$
value depends only on the $Q(l,*,*,*)$ and $Q(l-1,*,*,*)$ values. The allocation size of the second dimension of the array $Q$
can be made $n-2$ since the largest index of the second dimension in all the needed $Q(*,*,*,*)$ values is $n-3$
(corresponding to $k=n-2$) based on formula (\ref{eqn:L(n)}). The allocation size of the third dimension of the array $Q$ need not
be fixed at $(n^2+5)/2-2n$ as the first dimension of $P$ in Algorithm \ref{algo:D(n)} and can be made dependent on
the indices of the first two dimensions $l$ and $k$. Specifically, it need not
exceed $lk$ since $Q(l,k,N,s)=0$ for all $N>lk$ by definition. Since we actually only allocate size 2 for the first dimension
of the array $Q$, the index $l$ cannot be used anymore and the allocation size for the third dimension of the array $Q$
can be chosen to be $\min\{k(n-1)+1,(n^2+5)/2-2n\}$ since the largest index of $l$ is $n-1$ among all the needed
$Q(l,*,*,*)$ values.
The variable allocation sizes for the third dimension effectively makes the four dimensional array $Q$ a ``ragged''
array instead of a ``rectangular'' array using common data structure terminology. The allocation size of the fourth dimension of
$Q$ can also be made variable and dependent on the indices of the second and third dimensions $k$ and $N$ respectively.
Specifically, it can be chosen to be $M'(N,k)+1$ since, as explained before, $Q(l,k,N,s)=Q(l,k,N,N)$ for $s\ge M'(N,k)$.
Many of the fourth dimensional allocation sizes $M'(N,k)+1$
are as small as 1 instead of the fixed $n(n-3)/2+1$ as in Algorithm \ref{algo:D(n)}, thereby saving a lot of memory.
We summarize the allocation sizes for the four dimensions of the array $Q$ in Table \ref{tbl:Qallocationsize}.
Since the allocation sizes for the third and fourth dimensions of the four dimensional array $Q$ in the improved
algorithm would be variable, the pseudo-code that serves the purpose of line 2 for allocation of the four
dimensional array in Algorithm \ref{algo:D(n)} now would be replaced with a revised nested loop. The improved algorithm
for $L(n)$ can be previewed in Algorithm \ref{algo:improvedD(n)}. We omit the
pseudo-code to allocate the four dimensional array $Q$ in the improved algorithm as it is not conveniently
expressible without using real programming languages.

\begin{table}[!htb]
	\centering
	\caption{Allocation sizes of the four dimensions of the array $Q$ in the improved algorithm ($l$, $k$ $N$ and $s$ are index
	variables used the nested loops in memory allocation for $Q$)}
	\begin{tabular}{||c|c||}
		\hline\hline
		Dimension (index variable) & Allocation size\\
		\hline\hline
		1st ($l$) & 2\\
		\hline
		2nd ($k$) & $n-2$\\
		\hline
		3rd ($N$) & $\min\{k(n-1)+1,\lfloor (n^2+5)/2-2n\rfloor\}$ \\
		\hline
		4th ($s$) & $M'(N,k)+1$ \\
		\hline\hline
	\end{tabular}
	\label{tbl:Qallocationsize}
\end{table}

We introduce one more improvement that would save run time of Algorithm \ref{algo:D(n)}, although not memory
space usage. In Algorithm \ref{algo:D(n)} line 3 serves to fill in the four dimensional array $P$ and it would be implemented
using nested loops. In our improved algorithm the pseudo-code to fill in the four dimensional array $Q$ would still be
implemented using nested loops with the number of iterations in the third and fourth level of the loops adjusted to accommodate
the variable allocation sizes in these two dimensions as specified in Table \ref{tbl:Qallocationsize}.
A possible improvement here is the innermost loop for the fourth dimension
of the array $Q$. We already mentioned that in the improved algorithm the allocation size for the fourth dimension
of the array $Q$ depends on the second and third dimensional indices $k$ and $N$ and is chosen to be $M'(N,k)+1$.
Instead of having an index for the fourth dimension to iterate from 0 to $M'(N,k)$ for any given index $k$ for the
second dimension and $N$ for the third dimension, we can let the index start to iterate from $m'(N,l)$ instead of
0, where $m'(N,l)=\max\{0,m(N,l)\}$ and $m(N,l)=m(N,k,l)$ is defined similarly to $M(N,k)=M(N,k,l)$ as
\[ m(N,l)=m(N,k,l)=\min_{\pi\in \mathbf{P}(N,k,l),1\le j\le d(\pi)}\{j-\sum_{i=1}^{j}r_i(\pi)\}.\]
Based on the definition of $r_i(\pi)=\pi_i^{'}-\pi_i$, the partition $\pi$ in $\mathbf{P}(N,k,l)$ that achieves the minimum
in the definition 
of $m(N,k,l)$ is the partition $\pi^\star$ of $N$ whose conjugate partition is the partition
with as many parts equal to $l$ as possible with the associated $j^\star$ equal
to $d(\pi^\star)$. This shows that the function $m(N,k,l)$ actually does not depend on $k$ and we can write it as $m(N,l)$.
Note that $m(N,l)$ could take negative values too, which is why we define the nonnegative function
$m'(N,l)=\max\{0,m(N,l)\}$ to be used as the start of the index of the innermost loop while filling in the array $Q$.
Under this definition we clearly have $Q(l,k,N,s)=0$ if $m'(N,l)>0$ and $0\le s<m'(N,l)$, which makes it feasible to
skip filling in these array elements and save time. The pseudo-code for computing $m'(N,l)$ is presented in Algorithm
\ref{algo:m(N,l)} based on the $\pi^\star$ and $j^\star$ mentioned above.

\begin{algorithm}[h]
	\DontPrintSemicolon 
	\KwIn{A positive integer $N$ and a positive integer $l$}
	\KwOut{$m'(N,l)$}
	$q \gets \lfloor N/l \rfloor$\;
	$r \gets N\mod l$\;
	\If{$r=0$}	{ 
		\If{$l\le q$} {
			\Return{$l(q-l+1)$}\;
		}
		\Else {
			\Return{$0$}\;
		}
	}
	\Else{
		\If{$l\le q$} {
			\Return{$l(q-l)+r$}\;
		}
		\Else {
			\Return{$0$}\;
		}
	}
	\caption{Pseudo-code for computing the helper function $m'(N,l)$.}
	\label{algo:m(N,l)}
\end{algorithm}

We show the pseudo-code of the improved algorithm to compute $L(n)$ in Algorithm \ref{algo:improvedD(n)}. We mainly
emphasize the part that initializes and fills in the four dimensional array $Q$ after it has been
allocated. The remaining part that computes the $L(n)$ value after the array $Q$ has been filled in is similar to Algorithm
\ref{algo:D(n)} and is abbreviated on line 10. We assume all the elements of the array $Q$ are zero after it has been
allocated. The lower bound function $m'(N,l)$ for the innermost loop variable $s$ will be needed only once for each
given pair of $N$ and $l$ while the upper bound function $M'(N,k)$ for $s$ might be needed multiple times for each
given pair of $N$ and $k$. To further save time, all the $M'(N,k)$ values can be pre-computed and later retrieved
by table lookup.

\begin{algorithm}[h]
	\DontPrintSemicolon 
	\KwIn{A positive integer $n$}
	\KwOut{$L(n)$}
	Allocate a four dimensional array $Q$ with sizes specified in Table \ref{tbl:Qallocationsize}\;
	\For{$l \gets 0$ \textbf{to} $1$ } {
		\For{$k \gets 0$ \textbf{to} $n-3$ } {
			$Q[l][k][0][0] \gets 1$\;
		}
	}
	\For{$l \gets 1$ \textbf{to} $n-1$ } {
		\For{$k \gets 1$ \textbf{to} $n-3$ } {
			\For{$N \gets 1$ \textbf{to} $\min\{lk,\lfloor (n^2+3)/2-2n\rfloor\}$ } {
				\For{$s \gets m'(N,l)$ \textbf{to} $M'(N,k)$ } {
					Update $Q[l\mod 2][k][N][s]$ using the values of $Q[l\mod 2][k-1][N][s]$, $Q[(l-1)\mod 2][k][N][s]$,
					$Q[(l-1)\mod 2][k-1][N][s]$
					and $Q[(l-1)\mod 2][k-1][N-k-l+1][s+l-k-1]$ based on \cite[Theorem 1]{BarnesSavage1995}\;
				}
			}
		}
		\tcp{Sum needed $Q[(l-1)\mod 2][*][*][*]$ values to compute $L(l)$ if desired.}
	}
	Sum $Q[(n-1)\mod 2][*][*][*]$ values to compute $L(n)$\;
	\Return{$L(n)$}\;
	\caption{Improved algorithm to compute $L(n)$ that initializes
	and fills in the four dimensional array $Q$.}
	\label{algo:improvedD(n)}
\end{algorithm}

Based on formula (\ref{eqn:L(n)}) $L(n)$ is the sum of a finite number of $P(*,*,n-1,*)$ values. After filling in
the four dimensional array $P$ in Algorithm \ref{algo:D(n)}, we can actually not only compute $L(n)$,
but also all $L(i)$ for $i=1,2,\cdots, n-1$ if we have chosen to allocate size $n$ for the third dimension instead of 2 since
all the needed $P(*,*,*,*)$ values for them are also already in the four dimensional array $P$. If we have allocated only size
2 for the third dimension, we can still compute all $L(i)$ for $i=1,2,\cdots, n$ in a single run as long as we put the loop for
the third dimension as the outermost loop when filling
in the array $P$ and compute $L(i)$ when we have already filled in all the $P(*,*,i-1,*)$ values in $P[*][*][(i-1)\mod 2][*]$
before these array elements are overwritten later.

Similarly in our improved Algorithm \ref{algo:improvedD(n)} we can also compute all $L(i)$ values for
$i=1,2,\cdots, n$ in a single run since the nested loops from line 5 to 9 ensure that all $Q(i-1,*,*,*)$ values
stored in $Q[(i-1)\mod 2][*][*][*]$ are filled in before filling in the $Q(i,*,*,*)$ values stored in
$Q[i\mod 2][*][*][*]$. We can sum all the needed $Q(i-1,*,*,*)$ values to compute $L(i)$ before they are overwritten in
the next iterations. In Algorithm \ref{algo:improvedD(n)} we add a comment at the end of the body for the outermost
loop to indicate where code can be added to compute $L(i)$ values for $i<n$ if desired. The $L(n)$ value can still be
computed on line 10 after the entire loop from line 5 to 9 has ended.

It is easy to see that the time complexity to sum all the needed $P(*,*,*,*)$ values in Algorithm \ref{algo:D(n)}
or $Q(*,*,*,*)$ values in Algorithm \ref{algo:improvedD(n)} to compute $L(n)$
is $O(n^3)$ and is of lower order than the time complexity $O(n^6)$ of filling in the array $P$
in Algorithm \ref{algo:D(n)} or the array $Q$ in Algorithm \ref{algo:improvedD(n)} (for more details see the analysis in Section
\ref{sec:analysis}). Thus computing all $L(i)$ values for $i\le n$ takes essentially the same amount of time and space
as computing the single $L(n)$ value.

\section{Complexity analysis}
\label{sec:analysis}
In this section we show that the space and time complexity of Algorithm \ref{algo:improvedD(n)} achieve an improvement
by a constant factor compared to Algorithm \ref{algo:D(n)}. We understand this is not exciting theoretically as it
is not an asymptotic improvement. However, we emphasize that the discovery of the possibility of computing all
$L(i)$ (or $D(i)$) values for $i\le n$ in a single run has its own merit which we overlooked before. Plus, the techniques in the
improved algorithm deepen our understanding of the function $P(N,k,l,s)$ and may be applied to compute other similar
functions such as $D_0(n)$ and $D_{k\_con}(n)$ and may shed insight on the analysis
of their asymptotic orders.

Now let us analyze the memory usage of Algorithm \ref{algo:D(n)} and Algorithm \ref{algo:improvedD(n)}.
Assuming an allocation size 2 for the third dimension, the total allocation size (i.e. total number of elements) for
the four dimensional array $P$ in Algorithm \ref{algo:D(n)} for computing $L(n)$ is clearly
\[ f_1(n)=2(n-2)(\frac{n(n-3)}{2}+1)^2. \] 
The allocation size $f_4(n)$ for the four dimensional array $Q$ in our improved Algorithm \ref{algo:improvedD(n)}
for computing $L(n)$ does not
appear to have a simple closed form. We now show that $f_4(n)$ and $f_1(n)$ are of the same asymptotic order, but $f_4(n)$
achieves a constant factor improvement over $f_1(n)$:
\begin{theo}
	\label{thm_space_complexity}
There exist constants $c_1$ and $c_2$ ($0<c_1<c_2<1$) such that $c_1f_1(n)\le f_4(n)\le c_2f_1(n)$ for all sufficiently large $n$.
\end{theo}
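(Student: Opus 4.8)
The plan is to turn Table~\ref{tbl:Qallocationsize} into an exact formula for $f_4(n)$, isolate the $\Theta(n^{5})$ leading terms of both $f_1(n)$ and $f_4(n)$, and check that the two leading coefficients are positive with the one for $f_4$ strictly smaller. Note first that $f_1(n)=2(n-2)\bigl(\tfrac{(n-1)(n-2)}{2}\bigr)^{2}\sim\tfrac12 n^{5}$. For $f_4(n)$, multiplying the four allocation sizes in Table~\ref{tbl:Qallocationsize} and summing over the index variables (and discarding the $O(1)$ contribution of the degenerate slice $k=0$) gives
\[ f_4(n)=2\sum_{k=1}^{n-3}\ \sum_{N=0}^{c_k}\bigl(M'(N,k)+1\bigr),\qquad c_k:=\min\{\,k(n-1),\ \lfloor(n^{2}+5)/2-2n\rfloor-1\,\}. \]
The ``$+1$'' part contributes only $2\sum_k(c_k+1)=O(n^{3})$, so everything reduces to finding the leading term of $\Sigma(n):=\sum_{k=1}^{n-3}\sum_{N=0}^{c_k}M'(N,k)$ and comparing its coefficient with that of $\tfrac14 f_1(n)\sim\tfrac18 n^{5}$.

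The second step is to pin down $M'(N,k)$ from Algorithm~\ref{algo:M(N,k)}. Writing $q=\lfloor N/k\rfloor$ and $r=N\bmod k$, the algorithm returns $0$ whenever $k<q$ (i.e.\ roughly $N>k^{2}$), and otherwise returns one of $q(k-q+1)$, $q(k-q+1)-r$, $q(k-q-1)+r$; since $q\le k$ and $0\le r<k$ in that case, each of these equals $q(k-q)+\varepsilon$ with $|\varepsilon|\le k$. Hence $M'(N,k)=q(k-q)+O(k)$ uniformly when $k\ge q$, and $M'(N,k)=0$ when $k<q$. We also use the crude bound $M'(N,k)\le N$ already recorded after Theorem~\ref{thm_fourth_dimension}.

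For the upper bound I would substitute $M'(N,k)\le N$, so $\Sigma(n)\le\sum_k\sum_{N\le c_k}N\le\sum_k\tfrac12 c_k(c_k+1)$. The two terms defining $c_k$ cross over at $k\approx n/2$; splitting the $k$-sum there and using elementary power-sum estimates ($c_k=k(n-1)$ with $\sum_{k\le n/2}k^{2}\sim n^{3}/24$ on the low range, and $c_k\sim n^{2}/2$ essentially constant on the high range) yields $\Sigma(n)\le(\tfrac1{12}+o(1))n^{5}$, hence $f_4(n)\le(\tfrac16+o(1))n^{5}$. Since $f_1(n)\sim\tfrac12 n^{5}$, this gives $\limsup_{n\to\infty}f_4(n)/f_1(n)\le\tfrac13$, so any $c_2\in(\tfrac13,1)$, say $c_2=\tfrac12$, works for all large $n$; even the trivial estimate ``number of $(l,k,N)$-triples $\times$ largest fourth-dimension size'' already keeps this limsup below $1$. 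For the lower bound I would restrict to the box $k\in[n/4,n/3]$, $N\in[k^{2}/4,k^{2}/3]$: there $c_k=k(n-1)\ge N$, so the index $N$ lies inside the allocated third dimension; $q=\lfloor N/k\rfloor\in[k/4-1,k/3]\subset[0,k)$, so we are in the nonzero branch of Algorithm~\ref{algo:M(N,k)} with $q(k-q)\ge\tfrac16 k^{2}$, whence $M'(N,k)\ge\tfrac16 k^{2}-k=\Omega(n^{2})$ by the second step. This box contains $\Omega(n)$ choices of $k$, each with $\Omega(n^{2})$ choices of $N$, hence $\Omega(n^{3})$ pairs, so $\Sigma(n)=\Omega(n^{5})$ and $f_4(n)\ge c_1 f_1(n)$ for any sufficiently small $c_1>0$ and all large $n$. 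Combining the two bounds proves Theorem~\ref{thm_space_complexity}.

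The only genuinely delicate point — as opposed to routine summation — is the behaviour of $M'(N,k)$ produced by the case split in Algorithm~\ref{algo:M(N,k)}: one must check that the $O(k)$ correction in $M'(N,k)=q(k-q)+O(k)$ is uniform over the ranges in play and, for the lower bound, of strictly smaller order than the $n^{2}$-sized principal term on the chosen box, so that it cannot cancel it. Everything else is summation of polynomials in $k$ and $N$ over intervals with a single crossover point. If one wanted the sharp improvement ratio (empirically near $1/10$, matching the abstract), the power-sum estimates above would be upgraded to the corresponding Riemann-integral asymptotics, which determine $\lim_{n\to\infty}f_4(n)/f_1(n)$ exactly but are not needed for the stated existence claim.
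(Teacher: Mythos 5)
Your proposal is correct, and while it has the same two-sided structure as the paper's proof, the upper-bound half follows a genuinely different route. The paper never writes $f_4(n)$ as an explicit ragged sum; instead it bounds the \emph{savings} $f_1(n)-f_4(n)$ from below by two quantities: $T_1(n)$, the reduction from the third dimension being $k(n-1)+1$ rather than $n(n-3)/2+1$ when $k\le (n-3)/2$ (order $n^5/16$), and $T_2(n)$, the reduction from $M'(N,k)=0$ whenever $N\ge k(k+1)$ (order $n^5/24$), arriving at $f_4(n)\le \tfrac{7}{12}f_1(n)$. Your direct computation --- the exact formula $f_4(n)=2\sum_k\sum_{N\le c_k}(M'(N,k)+1)$, the crude bound $M'(N,k)\le N$, and power-sum estimates with the crossover at $k\approx n/2$ --- is equally rigorous and in fact yields the sharper constant $\limsup f_4(n)/f_1(n)\le 1/3$, closer to the empirical ratio near $0.1$ in Table \ref{tbl:allocationComparison} than the paper's $7/12$. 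For the lower bound both arguments exhibit a $\Theta(n)\times\Theta(n^2)$ box of pairs $(k,N)$ on which $M'(N,k)=\Omega(n^2)$: the paper takes $\tfrac{3}{4}(n-1)\le k\le n-3$ and $(n-1)(n-3)/4\le N\le (n-1)(n-3)/2$, which lets it state the explicit value $c_1=1/192$, whereas you take $k\in[n/4,n/3]$, $N\in[k^2/4,k^2/3]$; both choices work because in each case $N\le c_k$, $q=\lfloor N/k\rfloor<k$ places you in a nonzero branch of Algorithm \ref{algo:M(N,k)} with $q(k-q)=\Omega(n^2)$, and the $O(k)$ branch corrections (e.g.\ your $q\ge k/4-1$ gives $q(k-q)\ge k^2/6-2k/3$ rather than $k^2/6$) are of lower order, exactly the point of care you flag. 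What you give up relative to the paper is only explicit numerical values for $c_1$ and $c_2$, which the statement of Theorem \ref{thm_space_complexity} does not require.
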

\begin{proof}
We first perform a conservative analysis of how much memory space is saved by the improved Algorithm
\ref{algo:improvedD(n)}. That is, we will obtain a lower bound of $f_1(n)-f_4(n)$.

By Table \ref{tbl:Qallocationsize} the allocation size of the
second dimension of the array $Q$ is $n-2$ so the index $k$ for the second dimension will iterate from 0 to $n-3$. For each
$0\le k\le n-3$, the allocation size of the third dimension is $\min\{k(n-1)+1,\lfloor (n^2+5)/2-2n\rfloor\}$ instead of
the fixed first dimension allocation size
$n(n-3)/2+1$ for all $0\le k\le n-3$ as in Algorithm \ref{algo:D(n)}. When $k(n-1)\le (n^2+3)/2-2n$, i.e. $k\le (n-3)/2$,
the allocation size of the third dimension of the array $Q$ is $k(n-1)+1$. Thus, due to the reduced allocation size for the
third dimension of the array $Q$, the number of saved elements from the array $Q$ compared to the array $P$
in Algorithm \ref{algo:D(n)} is at least
\[ T_1(n)=(n(n-3)/2+1)(((n-3)/2+1)(n(n-3)/2+1)-\sum_{k=0}^{(n-3)/2}(k(n-1)+1)). \]
The function $T_1(n)$ is the product of two factors. The first factor $n(n-3)/2+1$ is the allocation size of the fourth dimension of
the array $P$ in Algorithm \ref{algo:D(n)}. The second factor is the total reduction of the allocation size of the third dimension
of the array $Q$ due to variable allocations in this dimension. It is easy to see that $T_1(n)$ has an asymptotic order
of $n^5/16$.

For each index $k$ for the second dimension of the array $Q$ in the range $0\le k\le (n-3)/2$, the allocation size for the
third dimension is $k(n-1)+1$ as shown above.
Thus the index $N$ for the third dimension will iterate from 0 to $k(n-1)$. For each pair of
$k$ and $N$, the allocation size for the fourth dimension is $M'(N,k)+1$. Observe that based on Algorithm
\ref{algo:M(N,k)} we have $M'(N,k)=0$ exactly when $\lfloor N/k \rfloor>k$, i.e. when $N\ge k(k+1)$. When $N=0$ or $k=0$
we can also define $M'(N,k)=0$. Thus, due to the reduced allocation size for the
fourth dimension of the array $Q$, the number of saved elements from the array $Q$ compared to the array $P$
in Algorithm \ref{algo:D(n)} is at least
\[ T_2(n)=(n(n-3)/2)\sum_{k=0}^{(n-3)/2}(k(n-1)-k(k+1)+1). \]
The function $T_2(n)$ is the product of two factors. The first factor $n(n-3)/2$ is the amount of reduction of the allocation size
of the fourth dimension from $n(n-3)/2+1$ in Algorithm \ref{algo:D(n)} to 1 in the improved Algorithm
\ref{algo:improvedD(n)} due to $M'(N,k)=0$ when $N\ge k(k+1)$. The second factor is a sum each of whose terms
counts the number of $N$ in $[k(k+1),k(n-1)]$ since exactly these $N$ satisfy $M'(N,k)=0$. It is easy to see that
$T_2(n)$ has an asymptotic order of $n^5/24$.

The total number of saved elements $f_1(n)-f_4(n)$ from the array $Q$ compared to the array $P$
in Algorithm \ref{algo:D(n)} is at least $2(T_1(n)+T_2(n))$, which has an asymptotic order of
$5n^5/24$. There is a factor 2 in this expression because we have not included the dimension of constant
allocation size into consideration yet in the above discussion.
Since $f_1(n)$ is asymptotically $n^5/2$, we see that $f_4(n)$ is asymptotically at most $n^5/2-5n^5/24=7n^5/24$,
which means $f_4(n)\le \frac{7}{12}f_1(n)$ for all sufficiently large $n$. This analysis is conservative as some of the
$M'(N,k)$ values are zero too when $(n-3)/2<k\le n-3$ and it has not considered
the reduction of the allocation sizes of the fourth dimension where $M'(N,k)$ is nonzero but less than $n(n-3)/2+1$.
We have shown that the constant $c_2$ in the statement of the theorem can be chosen to be 7/12.

We now derive a lower bound of $f_4(n)$. For each index $k$ for the second dimension of the array $Q$ in the range
$(n-3)/2<k\le n-3$, the allocation size for the third dimension is $(n-1)(n-3)/2+1$. We have already shown that
$M'(N,k)=0$ if and only if $N\ge k(k+1)$. Thus, for $(n-3)/2<k\le n-3$ and $0\le N\le (n-1)(n-3)/2$,
if $k(k+1)>(n-1)(n-3)/2$ (i.e. $k>\frac{\sqrt{2(n-1)(n-3)+1}-1}{2}$), then $M'(N,k)\neq 0$. Clearly
$\frac{3}{4}(n-1)>\frac{\sqrt{2(n-1)(n-3)+1}-1}{2}$ when $n$ is large. Now consider the range of $k$ such that
\[ \frac{3}{4}(n-1)\le k\le n-3. \]
Each $k$ in this range can be represented as $k=c(n-1)$ for some $3/4\le c\le 1$.
Also consider the range of $N$ such that
\[ (n-1)(n-3)/4\le N\le (n-1)(n-3)/2. \]
With $k$ and $N$ in the chosen ranges,
we have $\frac{n-3}{4c}\le \frac{N}{k}\le \frac{n-3}{2c}$ and $0\le r=(N\mod k)<k=c(n-1)$ so
$\frac{n-3-4c}{4c}\le q=\lfloor \frac{N}{k} \rfloor\le \frac{n-3}{2c}$. Therefore,
$(c-\frac{1}{2c})n+\frac{3}{2c}-c\le k-q\le (c-\frac{1}{4c})n+\frac{3+4c}{4c}-c$ and
$q(k-q)\ge ((c-\frac{1}{2c})n+\frac{3}{2c}-c)\frac{n-3-4c}{4c}$. Since $c-\frac{1}{2c}\ge 1/12>0$, we have $q(k-q)=\Omega(n^2)$.
Now $q$, $q-r$, and $r-q$ are all linear in $n$, we see that $M'(N,k)=\Omega(n^2)$ for the considered ranges of
$N$ and $k$ based on Algorithm \ref{algo:M(N,k)}. The number of $k$ in the range $\frac{3}{4}(n-1)\le k\le n-3$
is $\Omega(n)$ and the number of $N$ in the range $(n-1)(n-3)/4\le N\le (n-1)(n-3)/2$ is $\Omega(n^2)$. For each
pair of $k$ and $N$ in these ranges the allocation size for the fourth dimension is $M'(N,k)+1=\Omega(n^2)$.
Consequently, the total allocation size $f_4(n)$ for the four dimensional array $Q$ in our improved Algorithm
\ref{algo:improvedD(n)} is $\Omega(n^5)$.
Since $f_1(n)$ is asymptotically $n^5/2$, we have shown that there exists a constant $c_1$ such that
$f_4(n)\ge c_1f_1(n)$ for all sufficiently large $n$ where $c_1$ can be chosen to be $1/192$. And the theorem is proved.
\end{proof}

We have collected some values of $f_4(n)$ for $n\le 1000$ and compared them with $f_1(n)$ in Table \ref{tbl:allocationComparison}.
It appears that $\frac{f_4(n)}{f_1(n)}$ is about 10\% and it is likely to tend to some constant $C$ between 0.1 and 0.2,
which agrees with the lower bound $1/192$ and upper bound $7/12$ obtained in the proof of Theorem \ref{thm_space_complexity}.
The space complexity of Algorithm \ref{algo:D(n)} and the improved Algorithm \ref{algo:improvedD(n)} are dominated
by the allocation sizes of the four dimensional array $P$ and $Q$ respectively. Thus Algorithm \ref{algo:improvedD(n)}
achieves a constant factor improvement in memory space usage.
\begin{table}[!htb]
	\centering
	\caption{Comparison of allocation sizes of Algorithm \ref{algo:D(n)} and Algorithm \ref{algo:improvedD(n)}}
	\begin{tabular}{||c|c|c|c||}
		\hline\hline
		$n$ & $f_1(n)$ & $f_4(n)$ & $f_4(n)/f_1(n)$ \\ 
		\hline\hline
		10 & 20736 & 2030 & 0.0978974 \\ 
		\hline
		20 & 1052676 & 99736 & 0.0947452 \\ 
		\hline
		30 & 9230816 & 885350 & 0.0959124 \\ 
		\hline
		40 & 41730156 & 4041722 & 0.0968537 \\ 
		\hline
		50 & 132765696 & 12948206 & 0.0975267 \\ 
		\hline
		60 & 339592436 & 33286556 & 0.0980191 \\ 
		\hline
		70 & 748505376 & 73646710 & 0.0983917 \\ 
		\hline
		80 & 1480839516 & 146132702 & 0.0986823 \\ 
		\hline
		90 & 2698969856 & 266968646 & 0.0989150 \\ 
		\hline
		100 & 4612311396 & 457104592 & 0.0991053 \\ 
		\hline
		110 & 7483319136 & 742822422 & 0.0992638 \\ 
		\hline
		120 & 11633488076 & 1156341746 & 0.0993977 \\ 
		\hline
		130 & 17449353216 & 1736425796 & 0.0995123 \\ 
		\hline
		140 & 25388489556 & 2528987092 & 0.0996116 \\ 
		\hline
		150 & 35985512096 & 3587693526 & 0.0996983 \\ 
		\hline
		300 & 1182935794196 & 118676615988 & 0.1003238 \\ 
		\hline
		400 & 5018396965596 & 504274588310 & 0.1004852 \\ 
		\hline
		500 & 15376557756996 & 1546620017330 & 0.1005830 \\
		\hline
		600 & 38364293168396 & 3861311170282 & 0.1006486 \\ 
		\hline
		700 & 83078878199796 & 8365678364352 & 0.1006956 \\ 
		\hline
		800 & 162207987851196 & 16339372522124 & 0.1007310 \\ 
		\hline
		900 & 292629697122596 & 29484953979544 & 0.1007586 \\ 
		\hline
		1000 & 496012481013996 & 49988481364570 & 0.1007807 \\ 
		\hline\hline
	\end{tabular}
	\label{tbl:allocationComparison}
\end{table}

The time complexity of the two algorithms are
dominated by the time to fill in the four dimensional array $P$ and $Q$ respectively. Although the starting index
of the variable $s$ on line 8 in Algorithm \ref{algo:improvedD(n)} is from $m'(N,l)$ instead of 0, the time complexity of
Algorithm \ref{algo:improvedD(n)} still only achieves a constant factor improvement similar to the space complexity improvement
over Algorithm \ref{algo:D(n)}. This is can be shown as follows.

Based on Algorithm \ref{algo:m(N,l)} it is easy to see that $m'(N,l)=0$ if and only if $\lfloor N/l\rfloor <l$, i.e.
$N<l^2$. When $l>\sqrt{\frac{(n-1)(n-3)}{2}}$, we have $l^2>\frac{(n-1)(n-3)}{2}$. As shown in the proof of
Theorem \ref{thm_space_complexity}, when $k$ and $N$ are in the range $\frac{3}{4}(n-1)\le k\le n-3$
and $(n-1)(n-3)/4\le N\le (n-1)(n-3)/2$, we have $M'(N,k)=\Omega(n^2)$. The number of $n$ in the range
$[\sqrt{\frac{(n-1)(n-3)}{2}},n-1]$ is $\Omega(n)$. When $l$ is in this range and $(n-1)(n-3)/4\le N\le (n-1)(n-3)/2$,
we have $m'(N,l)=0$. This shows that when $l$, $k$ and $N$ are in these specified ranges, the time to fill in
this part of the array $Q$ takes $\Omega(n^6)$ time, which becomes a lower bound of the time complexity
of Algorithm \ref{algo:improvedD(n)}. Since Algorithm \ref{algo:D(n)} takes time $O(n^6)$, we have proved
that Algorithm \ref{algo:improvedD(n)} has a time complexity of the same order as Algorithm \ref{algo:D(n)}
and achieves a constant factor improvement.

Finally we note that adding the code to compute all $L(i)$ values for $i<n$ in Algorithm \ref{algo:improvedD(n)}
does not increase the memory space usage and it will increase the run time by at most $O(n.n^3)=O(n^4)$, which is
negligible compared to the time complexity $O(n^6)$ to fill the array $Q$. Thus,
computing all $L(i)$ values for $i\le n$ has essentially the same space and time complexity as computing
a single $L(n)$ value.

\section{Experimental evaluations and simulations}
\label{sec:experiments}
We have computed the exact values of $D(n)$ for $n$ up to 290 with the help of large memory supercomputers
from XSEDE. Based on these numerical results and the known
upper and lower bound of $D_0(n)$ given in Burns \cite{Burns2007}, we had conjectured that the asymptotic order
of $D(n)$ is like $\frac{c\times 4^n}{(\log{n})^{1.5}\sqrt{n}}$ for some constant $c$. We have performed further simulations
using a method similar to that in \cite{Burns2007} to estimate the asymptotic order of $D(n)$. Based on
simulation results for $n$ up to 700000000, it seems that $D(n)$ has an asymptotic order more like
$\frac{4^n\exp(-0.7\frac{\log n}{\log\log n})}{8\sqrt{\pi n}}$. The form of this function is inspired by
Burns \cite{Burns2007} and Pittel \cite{Pittel2018}.

\section{Discussions about asymptotic orders}
We tried to derive the asymptotic order of $D(n)$ through the multi-variate generating function of the multi dimensional
sequence $P(N,k,l,s)$:
\[ F(w,x,y,z)=\sum_{N=0}^\infty \sum_{k=0}^\infty \sum_{l=0}^\infty \sum_{s=0}^\infty P(N,k,l,s)w^Nx^ky^lz^s. \]
However we are unable to obtain a simple closed-form for this generating function. The function $P(N,k,l,s)$ is quite unusual.
For one thing the last index can be increased during its recursive computation. The related function $P(N,k,l)$
actually satisfies a similar recurrence as follows:
\[ P(N,k,l)=P(N-k-l+1,k-1,l-1)+P(N,k-1,l)+P(N,k,l-1)-P(N,k-1,l-1). \]
This recurrence is simpler than that of $P(N,k,l,s)$ in \cite[Theorem 1]{BarnesSavage1995} in the sense that no index in any
of the three dimensions could increase during its recursive computation. The two share a similarity that they do not
belong to the classes of multi-variate recurrences considered by Bousquet-M\'{e}lou and Petkov\v{s}ek \cite{BOUSQUETMELOU2000}.
The single-variate generating function of the sequence $P(N,k,l)$ when $k$ and $l$ are fixed
is known from \cite{Andrews1984}, but we are unable to extend it to a multi-variate generating function.

For each given triple of $N$, $k$ and $l$ we have shown the exact range of $s$ such that $P(N,k,l,s)$ changes from minimum
to maximum. The variable $s$ can be seen to measure how close a partition in $\mathbf{P}(N,k,l,s)$ is to a graphical partition.
Fine-tuned analysis for this range of $s$ together with all the known results about the order of $P(N,k,l)$ with $k$ and $l$
in various ranges relative to $N$ might help us better understand the behavior of $P(N,k,l,s)$. The number of graphical partitions of
an even integer $N$ is shown by Barnes and Savage \cite{BarnesSavage1995} to be $G(N)=P(N,N,N,0)$.
The best results about the order of $G(N)$ we know of are from Pittel \cite{Pittel1999,Pittel2018} and the tight asymptotic
order of $G(N)$ is unknown yet. Our simulation of the asymptotic order of $G(N)$ using uniform random integer partition
generators from \cite{arratia_desalvo_2016}  suggests that $\frac{G(N)}{P(N)}$
has an asymptotic order like $e^{-\frac{0.3\log{N}}{\log\log{N}}}$, which is also inspired by the bound of $G(N)$
given in \cite{Pittel2018} and is similar to a factor of the conjectured asymptotic order of $D(n)$
given above in Section \ref{sec:experiments}.
Thus the analysis of the asymptotic behavior of $P(N,k,l,s)$ also helps to determine
the unknown asymptotic order of $G(N)$ besides the functions counting various classes of graphical degree sequences
of given length. 

\section{Conclusions}
In this paper we presented an improved algorithm to compute $D(n)$ exactly. A main ingredient of the improvement
is an analysis of the fourth dimension of the function $P(N,k,l,s)$ of Barnes and Savage
such that the exact range of $s$ in which this function varies
with given $N,k,l$ is determined and then used to help reduce memory space usage. The new algorithm makes it feasible
to compute all $D(i)$ values for $i\le n$ in about 10\% of the time that takes the previous algorithm to compute a
single $D(n)$ value. The techniques can be applied to all related
functions that can be computed exactly based on the function $P(N,k,l,s)$.

\section{Acknowledgements}
This research has been supported by a research seed grant of Georgia Southern University. The computational
experiments have been supported by the XSEDE Startup Allocation TG-DMS170020 and Research Allocation TG-DMS170025.

\bibliographystyle{plain}
\bibliography{SimDG}

\end{document}